\newtheorem{theorem}{Theorem}
\theoremstyle{plain}
\newtheorem{corollary}{Corollary}
\newtheorem{definition}{Definition}
\newtheorem{lemma}{Lemma}
\newtheorem{remark}{Remark}
\numberwithin{equation}{section}
\begin{document}
\title{The Well-Covered Dimension of  Products of Graphs}

\author[I. Birnbaum]{Isaac Birnbaum}
\address{Department of Mathematics\\
California State University, Fresno. Fresno, CA.}
\email{isaacb1@mail.fresnostate.edu}
\author[M. Kuneli]{Megan Kuneli}
\address{Department of Mathematics\\
California State University, Fresno. Fresno, CA.}
\email{mrkunelius@mail.fresnostate.edu}
\author[R. McDonald]{Robyn McDonald} 
\address{
California State University, Stanislaus. Turlock, CA.}
\email{rmcdonald@csustan.edu}
\author[K. Urabe]{Katherine Urabe} 
\address{Department of Mathematics\\
California State University, Fresno. Fresno, CA.}
\email{kturabe@mail.fresnostate.edu}
\author[O. Vega]{Oscar Vega}
\address{Department of Mathematics\\
California State University, Fresno. Fresno, CA.}
\email{ovega@csufresno.edu}
\thanks{The first four authors were supported by the Undergraduate Research Grant Program and/or the Faculty Sponsored Student Research Awards at California State University, Fresno.}
\keywords{Well-covered dimension, maximal independent sets.}
\subjclass[2000]{Primary 05C50; Secondary 15A03}

\begin{abstract}
We discuss how to find the well-covered dimension of a graph that is the Cartesian product of paths, cycles, complete graphs, and other simple graphs.  Also, a bound for the well-covered dimension of $K_n\times G$ is found, provided that $G$ has a largest greedy independent decomposition of length $c<n$.  
 
Formulae to find the well-covered dimension of graphs obtained by vertex blowups on a known graph, and to the lexicographic product of two known graphs are also given.
\end{abstract}

\maketitle

\section{introduction}

In this paper, a graph is understood to be undirected and have no loops or multiple edges. While graphs with multiple edges could be taken under consideration, it is not necessary to do so as multiple edges do not add any difficulty or important properties.

A set of vertices in a graph $G$ is said to be independent if no two vertices in the set are joined by an edge. An independent set $M$ of $G$ is called maximal if no independent set of $G$ properly contains $M$. The largest (in terms of cardinality) maximal independent set (or sets) of $G$ is called a maximum independent set of $G$, and a graph is said to be well-covered if every maximal independent set of $G$ is also maximum. A well-covered graph could also be defined by the property of all maximal independent sets having the same cardinality. This notion was introduced by Plummer in \cite{P}. In \cite{BN}, Brown and Nowakowski defined a well-covered weighting of a graph $G$ as a function $w:V(G)\rightarrow \textbf F$ that assigns values to the vertices of $G$ in such a way that $\sum_{x\in M} w(x)$ is a constant for all maximal independent sets $M$ of $G$. It is immediate from the latter definition that one could re-define well-coveredness by saying that a well-covered graph is a graph that admits the constant function equal to $1$ as a well-covered weighting of $G$.  We will use Brown and Nowakowski's presentation (notation, nomenclature, etc), although this problem was originally introduced by Caro, Ellingham, Ramey, and Yuster in \cite{CER} and \cite{CY}.

It is easy to show that, once a field $\textbf{F}$ is fixed, the set of all well-covered weightings of a graph $G$ is an $\textbf{F}$-vector space, which is called \emph{the well-covered space of $G$}. The dimension of this vector space over $\textbf{F}$ is called the \emph{well-covered dimension of $G$} and is denoted by $wcdim(G,\textbf F)$. If $wcdim(G, \textbf F)$ does not depend on the field used then the well-covered dimension of $G$ is instead denoted as $wcdim(G)$.  Note that $wcdim(G, \textbf F)$ may change depending on $char(\textbf F)$. In \cite{BN},  and later in this article, examples of graphs with variable dimension are discussed. When the characteristic becomes something to consider we will be careful to remark on it.

Our graph theoretic notation, algebraic notation, and matrix theoretic notation are standard; the reader can look at \cite{W} for any concepts we fail to define. The vertex set of a graph $G$ is denoted by $V(G)$. The cardinality of a set of vertices $V$ is denoted by $|V|$.  A field with $q=p^h$ ($p$ prime) elements is denoted by $\textbf F_{q}$. The $n\times n$ identity matrix is denoted by $I_{n}$. The $n\times n$ matrix where each entry is a $1$ is denoted by $J_{n}$. An $m\times 1$ column vector where each entry is a $1$ is denoted by $\textbf 1_{m}$. An $m\times 1$ column vector where each entry is a $0$ is denoted by $\textbf 0_{m}$.

It is relatively simple to calculate the well-covered dimension of a graph $G$, provided $G$ is not too large. One first needs to find all the independent sets of $G$, which can be done using a greedy algorithm. Suppose that the maximal independent sets of $G$ are $M_{i}$ for $i=0,\ldots,k-1$. Then a well-covered weighting $w$ of $G$ is determined by a solution of the linear system of equations formed by selecting a maximal independent set,  in this particular instance $M_0$, and setting the system $M_i - M_0=0$ for $i=1,\ldots,k-1$. This system is homogeneous, and can therefore be written in the form $A\textbf x=\textbf0$. Note that $A$ is an $m\times n$ matrix where $m=k-1$ and $n=|V(G)|$. As this system is homogeneous, the nullity of $A$ (note that $char(\textbf{F})$ could be relevant here) is equivalent to $wcdim(G,\textbf F)$. So, $wcdim(G,\textbf F)=n-rank(A)$. In the case when $n=rank(A)$, then $wcdim(G,\textbf F)=0$, which implies that in this case the only possible well-covered weighting is the $0$ function.

For the remainder of this paper, we shall concern ourselves  only with the determining of the well-covered dimensions for various individual graphs and graph families. We start by recalling a lemma from \cite{BN}, as it will allow us to focus only on connected graphs.

\begin{lemma}[Brown \& Nowakowski \cite{BN}]
Let $G$ and $H$ be graphs. Then 
\[
wcdim(G \cup H, \textbf F) = wcdim(G, \textbf F)+wcdim(H, \textbf F)
\]
\end{lemma}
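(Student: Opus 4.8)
The plan is to establish an $\textbf F$-linear isomorphism between the well-covered space of $G \cup H$ and the (external) direct sum of the well-covered spaces of $G$ and $H$; additivity of dimension then follows immediately. The first step is to pin down the combinatorial structure of the maximal independent sets of the disjoint union. Since no edge of $G \cup H$ joins a vertex of $G$ to a vertex of $H$, a subset $M \subseteq V(G) \cup V(H)$ is independent in $G \cup H$ if and only if $M \cap V(G)$ is independent in $G$ and $M \cap V(H)$ is independent in $H$. Maximality then forces each piece to be maximal separately, so the maximal independent sets of $G \cup H$ are exactly the sets $M_G \cup M_H$ with $M_G$ maximal independent in $G$ and $M_H$ maximal independent in $H$.

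Next I would introduce the restriction map. Given a weighting $w : V(G) \cup V(H) \to \textbf F$, write $w_G$ and $w_H$ for its restrictions to $V(G)$ and $V(H)$. The key observation is that for a maximal independent set $M = M_G \cup M_H$ one has $\sum_{x \in M} w(x) = \sum_{x \in M_G} w_G(x) + \sum_{x \in M_H} w_H(x)$, because the union is disjoint. I would then show that $w$ is a well-covered weighting of $G \cup H$ exactly when $w_G$ and $w_H$ are well-covered weightings of $G$ and $H$ respectively. For the forward direction, fix one maximal independent set of $H$ and let only the $G$-part vary: the constancy of the total sum forces $\sum_{x \in M_G} w_G(x)$ to be constant, so $w_G$ is well-covered, and symmetrically for $w_H$. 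For the converse, if $w_G$ and $w_H$ have constant sums $c_G$ and $c_H$, then every maximal independent set of $G \cup H$ has total weight $c_G + c_H$, which is constant.

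Having proved this equivalence, the restriction map $w \mapsto (w_G, w_H)$ sends the well-covered space of $G \cup H$ into the direct sum of the well-covered spaces of $G$ and $H$. This map is manifestly $\textbf F$-linear, it is injective because $V(G)$ and $V(H)$ partition the vertex set of $G \cup H$, and the converse direction above shows it is surjective, since any pair of well-covered weightings glues to a single well-covered weighting of $G \cup H$. Thus it is an isomorphism, and taking dimensions yields the claimed formula. I do not expect a genuine obstacle here; the one point requiring care is the forward direction of the equivalence, where one must be certain that fixing the $H$-part while ranging over all maximal independent sets of $G$ is legitimate. This is precisely guaranteed by the structure result from the first step, which certifies that the combined sets $M_G \cup M_H$ really are the maximal independent sets of $G \cup H$.
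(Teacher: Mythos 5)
Your proposal is correct: the decomposition of maximal independent sets of $G \cup H$ as exactly the unions $M_G \cup M_H$, together with the restriction map $w \mapsto (w_G, w_H)$ giving a linear isomorphism onto the direct sum of the two well-covered spaces, is a complete and rigorous argument. Note that the paper itself offers no proof of this lemma --- it is simply recalled from Brown and Nowakowski \cite{BN} --- so there is no in-paper proof to compare against; your argument is the natural one, and the only point deserving the care you already gave it is the forward direction, where the structure of maximal independent sets justifies fixing the $H$-part while varying the $G$-part.
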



\bigbreak

Although our main focus is to find the well-covered dimension of products of graphs, we will start with a few general results.

\section{The Well-covered dimension of certain families of graphs}\label{sectionbasic}

The family of complete graphs has the easiest to find well-covered dimension among all (connected) graphs. In fact, by simply looking at the maximal independent sets of $K_n$ we get that $wcdim(K_{n})=1$. Also, only using the technique mentioned above, it is easy to find the well-covered dimension of several families of graphs. In this section we discuss crown graphs, complete multipartite graphs, paths, cycles, and gear graphs.

Recall that, for any $n>2$, the crown graph $S_{n}^{0}$ is formed by removing a perfect matching from $K_{n,n}$. Though not specifically stated as such, it was proven in \cite{BN} that $wcdim(S_{n}^{0},\textbf F)=n-1$, if $char(\textbf F)=0$, and $wcdim(S_{n}^{0},\textbf F)=n$ if both $char(\textbf F)$ and $n$ are even. We shall extend this result to allow us to calculate the well-covered dimensions of all crown graphs over all fields.

\begin{theorem}
Let $S_{n}^{0}$ denote a crown graph, for all $n\in \mathbb{N}$. Then,
\[
wcdim(S_{n}^{0},\textbf F) =
\left\{
\begin{array}{ll}
n & $if $char(\textbf F)= p\neq 0$ and $p| (n-2)$$ \\
n-1 & $otherwise$ 
\end{array}
\right.
\]
\end{theorem}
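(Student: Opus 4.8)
The plan is to follow the general recipe from the introduction: enumerate the maximal independent sets of $S_n^0$, assemble the resulting homogeneous system $A\textbf x=\textbf 0$, and compute its nullity, watching carefully for the single spot where $char(\textbf F)$ enters.

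First I would fix the bipartition $V(S_n^0)=\{a_1,\ldots,a_n\}\cup\{b_1,\ldots,b_n\}$, where $a_i$ is adjacent to $b_j$ precisely when $i\neq j$ (this is $K_{n,n}$ with the matching $\{a_ib_i\}$ removed). The key structural step is to prove that the maximal independent sets are exactly: the part $A=\{a_1,\ldots,a_n\}$, the part $B=\{b_1,\ldots,b_n\}$, and the $n$ pairs $\{a_i,b_i\}$. Indeed, any independent set meeting both sides can contain at most one vertex from each, since two same-side vertices $a_i,a_j$ together with a vertex $b_k$ would force $k=i$ and $k=j$; and a mixed pair $\{a_i,b_j\}$ is independent iff $i=j$. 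Such a pair is maximal because every remaining vertex is adjacent to $a_i$ or to $b_i$. An independent set contained in a single side is maximal only when it is the whole side, giving $n+2$ maximal independent sets in total.

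Next I would write a weighting as $w(a_i)=x_i$, $w(b_i)=y_i$ and impose that the sums over all maximal independent sets agree. Taking $A$ as the reference set and writing $S=\sum_j x_j$, the conditions become $\sum_i y_i=S$ together with $x_i+y_i=S$ for each $i$. Solving the second family gives $y_i=S-x_i$, and substituting into the first yields $\sum_i(S-x_i)=nS-S=(n-1)S=S$, i.e. the single scalar equation $(n-2)S=0$. This reduction, collapsing the entire $(n+1)\times 2n$ system to one scalar constraint on $S$, is the crux of the argument and the exact point at which the characteristic of $\textbf F$ becomes decisive; I expect the enumeration of maximal independent sets and this one-line reduction to be the only places needing genuine care.

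Finally I would split into cases according to whether $n-2$ vanishes in $\textbf F$. If $char(\textbf F)=p\neq 0$ with $p\mid(n-2)$, the equation $(n-2)S=0$ is vacuous, so $x_1,\ldots,x_n$ are free and the $y_i$ are determined, giving $wcdim(S_n^0,\textbf F)=n$. Otherwise $n-2\neq 0$ in $\textbf F$ forces $S=0$; then $(x_1,\ldots,x_n)$ ranges over the hyperplane $\sum_j x_j=0$ and each $y_i=-x_i$ is determined, giving $wcdim(S_n^0,\textbf F)=n-1$. The one bookkeeping check is that the ``otherwise'' branch correctly absorbs $char(\textbf F)=0$ (where $n-2\neq 0$ since $n>2$), recovering the known values recalled from \cite{BN}.
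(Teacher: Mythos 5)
Your proposal is correct and follows essentially the same route as the paper: the same enumeration of the $n+2$ maximal independent sets, the same homogeneous system referenced against one side of the bipartition, and the same reduction to the single condition $(n-2)S=0$ — your substitution $y_i=S-x_i$ is exactly the paper's row operation of subtracting the top $n$ rows of its matrix from the bottom row, which produces the row $(n-2)\textbf{1}_n^T$. The only differences are cosmetic (explicit parametrization of the solution space rather than a rank/nullity count), plus your welcome extra care in actually proving the list of maximal independent sets, which the paper merely asserts.
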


\begin{proof}
Let $K_{V_{1},V_{2}}$ be the complete bipartite graph with $V_{1}=\{a_{1},\cdots ,a_{n}\}$ and $V_{2}=\{b_{1},\cdots ,b_{n}\}$ and let $a_{1}b_{1},\cdots ,a_{n}b_{n}$ be the perfect matching that is removed from $K_{V_{1},V_{2}}$ to form $S_{n}^{0}$. The maximal independent sets of $S_{n}^{0}$ are $\{a_{i}, b_{i}\}$ for $i=1,2,3,\cdots ,n$, and $V_{1}$ and $V_{2}$. Setting the sum of each of the weights on the maximal independent sets equal to that of the weights on the vertices of $V_{2}$, we find that the linear system corresponding to the well-covered weightings is $A\textbf{x}=0$, where
\[
A=\begin{pmatrix}
I_{n} &  I_{n}-J_{n}\\
\textbf{1}_{n}^{T} & 
-\textbf{1}_{n}^{T}\\
\end{pmatrix},
\]
an $(n+1)\times 2n$ matrix. Subtracting the top $n$ rows from the bottom yields
\[
\begin{pmatrix}
I_{n} &  I_{n}-J_{n}\\
\textbf{0}_{n}^{T} & \left(n-2\right)\textbf{1}_{n}^{T}\\
\end{pmatrix}.
\]
It follows that we have two possibilities depending on whether or not $char(F)$ divides $n-2$. The theorem follows after finding the rank of this matrix in either case.
\end{proof}

\begin{theorem}\label{k-partite}
Let $G=K_{n_{0},\ldots ,n_{k-1}}$ be a complete $k$-partite graph. Then
\[
wcdim(K_{n_{0},...,n_{k-1}}) =\sum_{i=0}^{k-1} n_{i} -(k-1)
\]
\end{theorem}

\begin{proof}
Let $f$ be a well-covered weighting of $G$. We denote the maximal independent sets of $G$ by $N_{i}$, where $|N_i|=n_1$, for all $i=0,\ldots ,k-1$. Setting the sum of each of the weights on the maximal independent sets equal to that of the weights on the vertices of $N_{k-1}$, we find that the linear system corresponding to the well-covered weightings is $A\textbf{x}=0$, where
\[
A=\begin{pmatrix}
\textbf{1}_{n_{0}}^{T} & \textbf{0}_{n_{1}}^{T} & \textbf{0}_{n_{2}}^{T} & \cdots & \textbf{0}_{n_{k-2}}^{T} & -\textbf{1}_{n_{k-1}}^{T}\\
\textbf{0}_{n_{0}}^{T} & \textbf{1}_{n_{1}}^{T} & \textbf{0}_{n_{2}}^{T} & \cdots & \textbf{0}_{n_{k-2}}^{T} & -\textbf{1}_{n_{k-1}}^{T}\\
\textbf{0}_{n_{0}}^{T} & \textbf{0}_{n_{1}}^{T} & \textbf{1}_{n_{2}}^{T} & \cdots & \textbf{0}_{n_{k-2}}^{T} & -\textbf{1}_{n_{k-1}}^{T}\\
\vdots & \vdots & \vdots & \ddots & \vdots & \vdots\\
\textbf{0}_{n_{0}}^{T} & \textbf{0}_{n_{1}}^{T} & \textbf{0}_{n_{2}}^{T} & \cdots & \textbf{0}_{n_{k-2}}^{T} & -\textbf{1}_{n_{k-1}}^{T}\\
\end{pmatrix},
\]
which is a $\displaystyle{(k-1)\times \left(\sum_{i=0}^{k-1} n_{i}\right)}$ matrix. $A$ has rank $k-1$. Hence the nullity is $\displaystyle{\sum_{i=0}^{k-1} n_{i} -(k-1)}$, which is what we wanted to prove.
\end{proof}

\begin{corollary}\label{turan}
Let $T(n,r)$ be a Tur\'an graph. Then
\[
wcdim\left(T\left(n,r\right)\right)=\left(n\bmod r\right)\lceil n/r\rceil+\left(r-\left(n\bmod r\right)\right)\lfloor n/r\rfloor-(r-1)
\]
Moreover, if $r$ divides $n$ then $wcdim\left(T\left(n,r\right)\right)=n-r+1$.
\end{corollary}

The behavior, in terms of well-covered weightings, of paths and cycles is very similar. Hence, we will study these two families simultaneously.

Consider $G$ to be an $n$-path or an $(n+2)$-cycle, for $n\geq 6$. Label six `consecutive' vertices $a,b,c,d,e$ and $f$ as in Figure \ref{sixconsecutivevertices}. Let $w$  be a well-covered weighting of $G$, and let $M_1$ and $M_2$ be two maximal independent sets of $G$ that contain the same vertices, except that $M_1$ contains $\{a,c,f\}$ and $M_2$ contains $\{a,d,f\}$ instead. Locally, these two independent sets are represented in the figure below.
\begin{figure}[ht]
\centering 
\includegraphics[height=.7in]{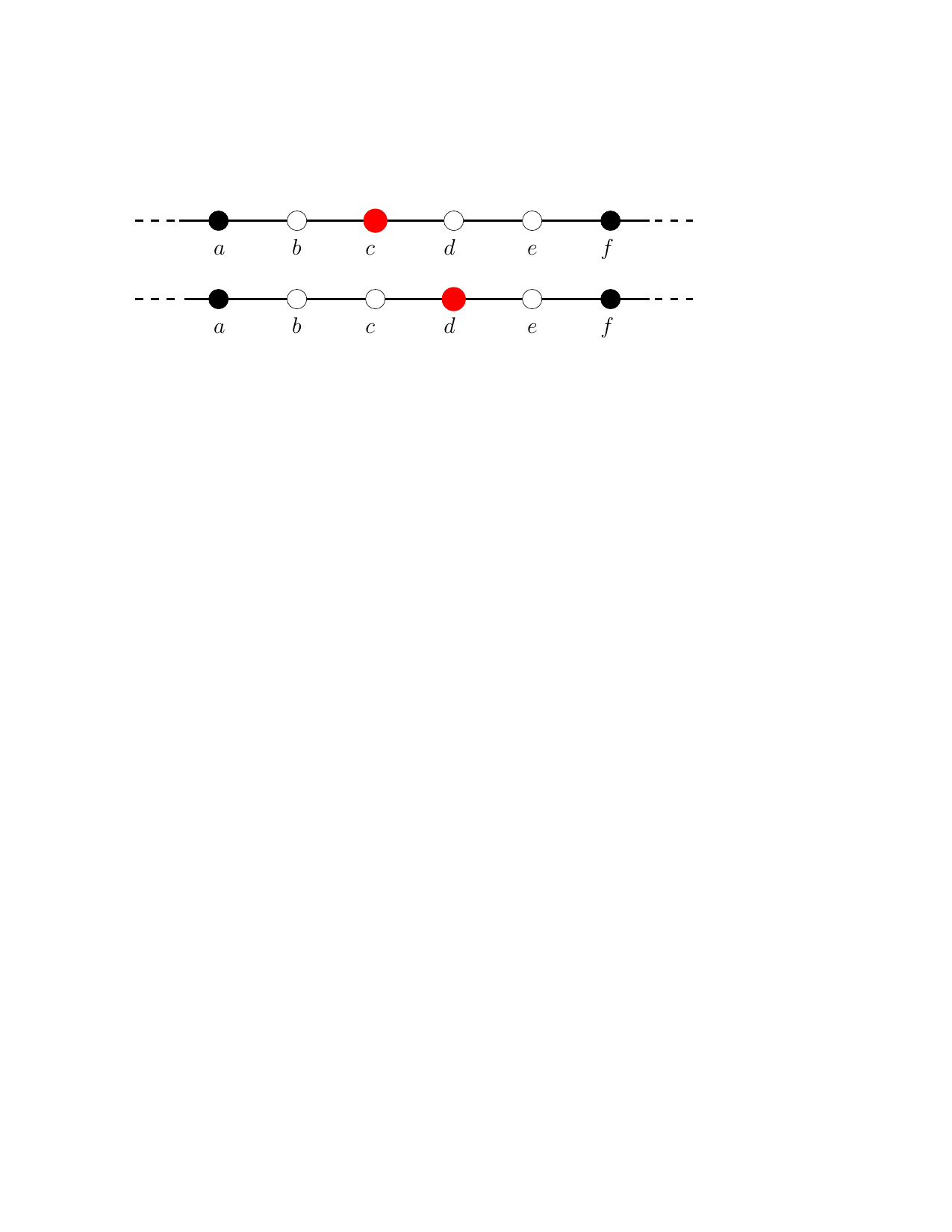}
\caption{$M_1$ and $M_2$ on six consecutive vertices.}
\label{sixconsecutivevertices}
\end{figure}

Since $M_1$ and $M_2$ just `interchange' $c$ and $d$, then $w(c)=w(d)$. It is now immediate that all vertices of $C_n$, for $n\geq 8$, have the same weight for all well-covered weightings of this graph. Hence, $wcdim(C_n) \leq 1$ for all $n\geq 8$.

Now consider two maximal independent sets $N_1$ and $N_2$ of $C_n$, with $n\geq 8$, that contain the same vertices outside of a string of seven consecutive vertices, where $N_1$ and $N_2$ contain four and three vertices respectively. These seven vertices, with the vertices contained in $N_1$ and $N_2$ are represented in Figure \ref{differentnumberofelements} below.
\begin{figure}[ht]
\centering 
\includegraphics[height=.7in]{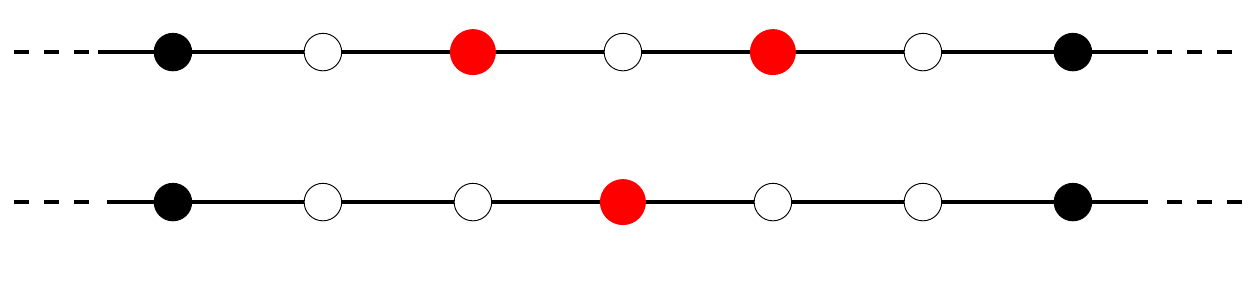}
\caption{Two maximal independent sets with different cardinality.}
\label{differentnumberofelements}
\end{figure}

It follows that this graph admits maximal independent sets with different cardinalities, and thus $wcdim(C_n) =0$ for all $n\geq 8$.

Similarly, from the argument associated to Figure \ref{sixconsecutivevertices}, if $n\geq 6$ and $V(P_n)=\{ v_1, v_2, \cdots , v_n\}$ (edges connecting $v_i$ with $v_{i+1}$) then vertices $v_3, \cdots , v_{n-2}$ must have the same weight for all well-covered weightings of $P_n$. Moreover, for small values of $n$ it is easy to see that these weights must be zero. For larger values of $n$ Figure \ref{differentnumberofelements} provides a way to construct maximal independent sets with different cardinality, which forces $w(v_3) =  \cdots =w(v_{n-2})$.

Finally, we can construct two maximal independent sets of $P_n$ that share all but one vertex, which is $v_1$ for one of them and $v_2$ for the other. This can be seen in the figure below.
\begin{figure}[ht]
\centering 
\includegraphics[height=.7in]{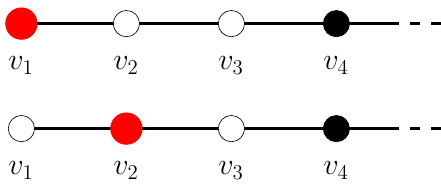}
\caption{Two maximal independent sets on the first four vertices.}
\label{fourendvertices}
\end{figure}

It follows that $w(v_1)=w(v_2)$, and symmetrically that $w(v_{n-1})=w(v_n)$, for all well-covered weightings $w$ of $P_n$. Lastly, we want to remark that $w(v_1)$ is independent of $w(v_n)$, and thus, adding simple computations to the arguments above we obtain the following result:

\begin{theorem}\label{thmpathcycles}
If $w$ is a well-covered weighting of $P_{n}$ and $n\geq 5$, then 
\[
w(v_{1})=w(v_{2}) \hspace{.6in} w(v_{3})=\ldots=w(v_{n-2})=0 \hspace{.6in} w(v_{n-1})=w(v_{n}). 
\]
Moreover,
\[
wcdim(P_n)=
\left\{ \begin{array}{cl}
1 & \text{ if $n=2$} \\
2  & \text{ if $n>2$}
\end{array}
\right. \hspace{.5in} 
wcdim(C_n)=
\left\{ \begin{array}{cl}
3 & \text{ if $n=4$} \\
2 & \text{ if $n=6$} \\  
1 & \text{ if $n=3,5,7$} \\
0  & \text{ if $n\geq8$}
\end{array}
\right.
\]
\end{theorem}

\begin{remark}
The well-covered dimensions of paths had already been computed in \cite{CY} using methods different from the one used in this paper.
\end{remark}

Now we look at the family of gear graphs. A gear graph over $2n+1$ vertices, denoted $G_{n}$ is the graph with vertex set $V(G_{n})=\{v_{0},...,v_{2n-1},v_{c}\}$ where: \\
\textbf{(a)} $v_{i}$ is adjacent to $v_{i-1\bmod  2n}$ and $v_{i+1\bmod 2n}$ for $0\leq i\leq 2n-1$. \\
\textbf{(b)}  if $i\in 2\mathbb{Z}$, then $v_{i}$ is adjacent to $v_{c}$.

We can compute the well-covered dimensions of the gear graphs using the same methods we used to compute the well-covered dimensions of the cycles.

\begin{corollary}
Let $G_n$ be the gear graph in $2n+1$ vertices, then
\[
wcdim(G_{n})= \left\{
\begin{array}{ll}
3 & $if$ \ n=3 \\
0 &  $if$  \ n>3
\end{array}
\right.
\]
\end{corollary}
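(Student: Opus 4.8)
The plan is to first determine the maximal independent sets of $G_{n}$ and then split into cases according to the length of the rim. Write $v_{c}$ for the center and $v_{0},\dots ,v_{2n-1}$ for the rim cycle $C_{2n}$, so that $v_{c}$ is adjacent to exactly the even vertices $v_{0},v_{2},\dots ,v_{2n-2}$. I claim every maximal independent set is of one of two kinds. If it contains $v_{c}$, then it contains no even vertex, and since the odd vertices are pairwise non-adjacent on $C_{2n}$ it must be the unique set $\{v_{c},v_{1},v_{3},\dots ,v_{2n-1}\}$. If it omits $v_{c}$, then being maximal in $G_{n}$ is equivalent to being a maximal independent set of $C_{2n}$ that contains at least one even vertex (an even vertex being exactly what prevents $v_{c}$ from being added). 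This dichotomy is what allows the cycle techniques from figure~\ref{sixconsecutivevertices} to be transplanted to the rim.

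For $n>3$ the rim is $C_{2n}$ with $2n\geq 8$, so I would run the interchange of figure~\ref{sixconsecutivevertices} along it: on six consecutive rim vertices the move replacing $v_{i}$ by $v_{i+1}$ leaves a maximal independent set of $G_{n}$ maximal and changes only that slot, forcing $w(v_{i})=w(v_{i+1})$; chaining around the cycle gives all rim vertices a common weight $t$. Comparing the all-even set $\{v_{0},v_{2},\dots ,v_{2n-2}\}$ with $\{v_{c},v_{1},\dots ,v_{2n-1}\}$, whose weighted sums both equal the common constant $S=nt$, yields $w(v_{c})=0$. Finally, two rim-maximal sets that both contain an even vertex and whose sizes differ by exactly one---the all-even set of size $n$ and a maximal independent set of $C_{2n}$ of size $n-1$ (two gaps of length three among gaps of length two)---give $nt=(n-1)t=S$, so $t=0$ over every field. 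Hence all weights vanish and $wcdim(G_{n})=0$.

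For $n=3$ the rim is only $C_{6}$, which is too short for the six-vertex interchange (the outer endpoints $v_{i-2}$ and $v_{i+3}$ collide into adjacent vertices), so I would instead compute directly. The maximal independent sets are $\{v_{c},v_{1},v_{3},v_{5}\}$ together with the rim-maximal sets containing an even vertex, namely $\{v_{0},v_{2},v_{4}\}$, $\{v_{0},v_{3}\}$, $\{v_{1},v_{4}\}$, and $\{v_{2},v_{5}\}$. Equating each weighted sum to that of $\{v_{0},v_{2},v_{4}\}$ produces four homogeneous equations in the seven weights which express $w(v_{1}),w(v_{3}),w(v_{5}),w(v_{c})$ in terms of $w(v_{0}),w(v_{2}),w(v_{4})$; the coefficient matrix has rank $4$ over every field, so the well-covered space has dimension $7-4=3$ independently of $char(\textbf{F})$.

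The main obstacle I expect is the bookkeeping in the case $n>3$ rather than any algebra: one must check that each rim interchange can actually be completed to a pair of maximal independent sets of $G_{n}$---in particular that an even vertex always survives away from the swapped slot so that $v_{c}$ stays unaddable---and that for every $2n\geq 8$ there really is a rim-maximal set of size $n-1$ containing an even vertex. Pinning down that $n=3$ is exactly the threshold at which the interchange of figure~\ref{sixconsecutivevertices} fails is what cleanly separates the two cases.
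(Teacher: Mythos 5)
Your proposal is correct and follows exactly the route the paper intends: the paper offers no written proof for this corollary, only the remark that ``the same methods used for the cycle graphs'' apply, and your argument for $n>3$ (the interchange of figure \ref{sixconsecutivevertices} to equalize rim weights, plus maximal independent sets of different cardinality to force those weights to zero) is precisely that method, with the useful added check that the swapped sets retain an even vertex so they stay maximal in $G_n$. Your separate direct computation for $n=3$ (five maximal independent sets, a rank-$4$ system in seven unknowns over any field) is the paper's standard technique from the introduction and correctly yields dimension $3$.
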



We have used that if we had maximal independent sets that share many vertices then some relations between the weights of the vertices may be found. We close this section with a generic result that has some relation with the technique just mentioned.

\begin{lemma}\label{lemmazeroregions}
Let $G$ and $H$ be graphs such that $G$ is a subgraph of $H$, $wcdim(G, \textbf{F})=0$, and that there is a maximal independent set $M$ of $H\setminus G$ such that $M\cup N$ is a maximal independent set of $H$, for all maximal independent sets $N$ of $G$. Then, every well-covered weighting of $H$ (over $\textbf{F}$) is constant equal to zero on $V(G)$.
\end{lemma}

\begin{proof}
We look at the system created by considering the maximal independent sets of $H$ of the form $M\cup N$, where  $N$ is a maximal independent set of $G$.  This system yields no restrictions on the vertices of $H\setminus G$ but, since $wcdim(G, \textbf{F})=0$, we get that the weights for the vertices of $G$ must all be equal to zero. Since the equations in this system are a subset of  the equations in the system that we would need to analyze to get $wcdim(H,\textbf{F})$ then the result follows.
\end{proof}

\section{Blowups and lexicographic products}

In this section we look at the well-covered dimension of graphs that can be constructed from known ones by using various techniques. We begin with a definition.

\begin{definition}
Let $G$ be a graph and $t\in \mathbb{N}$. A $t$-blowup of a vertex $v_i\in V(G)$ is an independent set $V_{v_i} =\{v_{i1}, v_{i2}, \cdots , v_{it}  \}$ that `takes the place' of $v$. More precisely, wherever there was an edge joining $v$ to $w\in V(G)$ there is an edge joining $v_i$ with $w$.

The graph obtained by the $t$-blowup of $v$ will be denoted $G(tv)$. Similarly, for $v,w\in V(G)$ and $s,t \in \mathbb{N}$ we denote a `double blowup' $G(tv)(sw)$ as $G(tv , sw)$. For multiple blowups we extend in the natural way the notation set of double blowups.
\end{definition}

Note that $G(v)=G$ for all $v\in V(G)$.

\begin{lemma}\label{G(tv)}
Let $G$ be a graph with $V(G) = \{v_1, \cdots , v_n\}$, and $m = wcdim(G, \textbf F)$. Let $H= G(tv_1)$, where $t \in \mathbb{N}$. Then, $wcdim(H, \textbf F) = m+t-1$.
\end{lemma}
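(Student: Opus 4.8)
The plan is to relate the well-covered space of $H = G(tv_1)$ to that of $G$ through an explicit linear map and then apply the rank-nullity theorem. The conceptual foundation is a correspondence between the maximal independent sets of the two graphs. I would first argue that this correspondence is a bijection: if $M$ is a maximal independent set of $G$ with $v_1 \in M$, then since the blowup vertices $v_{11}, \ldots, v_{1t}$ share the neighborhood of $v_1$ and are mutually non-adjacent, the set $(M \setminus \{v_1\}) \cup V_{v_1}$ is maximal independent in $H$; and if $v_1 \notin M$, then $v_1$ must have a neighbor in $M$, so every $v_{1j}$ does too, and $M$ itself remains maximal in $H$. One must also check there are no other maximal independent sets of $H$, i.e.\ that every maximal independent set of $H$ either contains all of $V_{v_1}$ or none of it and therefore descends to a maximal independent set of $G$.

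With the correspondence in hand, I would define a linear map $\Phi$ from the well-covered space of $H$ to the weightings of $G$ by $\Phi(w)(v_i) = w(v_i)$ for $i \neq 1$ and $\Phi(w)(v_1) = \sum_{j=1}^{t} w(v_{1j})$. The key computation is that for each maximal independent set $M$ of $G$, the sum of $\Phi(w)$ over $M$ equals the sum of $w$ over the corresponding maximal independent set of $H$; since the latter is constant in $M$, the map $\Phi$ lands in the well-covered space of $G$. Surjectivity follows by lifting any well-covered weighting $u$ of $G$ to $H$ (for instance, placing $u(v_1)$ on $v_{11}$, zero on the other blowup vertices, and keeping all remaining weights), so that $\mathrm{im}(\Phi)$ is the full well-covered space of $G$, of dimension $m$. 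For the kernel, $w \in \ker \Phi$ precisely when $w$ vanishes on $V(G) \setminus \{v_1\}$ and $\sum_{j} w(v_{1j}) = 0$; any such $w$ is automatically a well-covered weighting of $H$ (every maximal-independent-set sum is $0$), so the kernel is exactly the hyperplane $\sum_j w(v_{1j}) = 0$ inside the $t$-dimensional coordinate space of the blowup vertices, hence of dimension $t-1$. Rank-nullity then yields $wcdim(H, \textbf F) = (t-1) + m$.

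The main obstacle is ensuring the maximal independent set correspondence is tight in both directions, since both the well-definedness of $\Phi$ (that it maps into the well-covered space of $G$) and the characterization of $\ker \Phi$ rest on it. The delicate point is the ``all-or-nothing'' behavior of the blowup set: I would verify carefully that a maximal independent set of $H$ cannot contain a proper, nonempty subset of $V_{v_1}$, so that no spurious maximal independent sets of $H$ escape the correspondence and break the constancy argument. Once that is secured, the remaining steps are routine linear algebra.
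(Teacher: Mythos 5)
Your proof is correct, but it takes a genuinely different route from the paper's. Both arguments rest on the same combinatorial core: the all-or-nothing correspondence between maximal independent sets of $G$ and of $H=G(tv_1)$, which you rightly flag as the delicate point (it holds because the blowup vertices are pairwise non-adjacent and have identical neighborhoods, so any maximal independent set of $H$ meeting $V_{v_1}$ must contain all of it). After that the two proofs diverge. The paper stays with the constraint matrices: if $M$ is the matrix of the homogeneous system for $G$, the correspondence shows that the matrix $M(t)$ for $H$ is just $M$ with the column of $v_1$ repeated $t$ times, so it has the same number of rows and the same rank $n-m$ but $t-1$ more columns, whence the nullity --- the well-covered dimension --- grows by exactly $t-1$. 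You instead work intrinsically with the well-covered spaces themselves: the collapsing map $\Phi$, with $\Phi(w)(v_1)=\sum_{j} w(v_{1j})$, is a linear surjection from the well-covered space of $H$ onto that of $G$ (surjectivity via your lift placing $u(v_1)$ on a single blowup vertex), and its kernel is the hyperplane $\sum_{j} w(v_{1j})=0$ supported on $V_{v_1}$, of dimension $t-1$; rank-nullity finishes. Your version is longer but makes explicit the steps the paper leaves implicit --- in particular the paper asserts that $M(t)$ has the same rank as $M$ without justification, whereas in your argument each ingredient (well-definedness of $\Phi$, surjectivity, kernel computation) is verified directly from the correspondence. What the paper's matrix approach buys is brevity and consistency with the computational framework set up in its introduction; what yours buys is a coordinate-free, structural explanation of where the extra $t-1$ dimensions live, namely in weightings supported on the blowup that sum to zero.
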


\begin{proof}
We begin by noticing that a maximal independent set of $G$ not containing $v_1$ is also a maximal independent set of $H$, and if $S = \{v_{1}, v_{i_2}, \cdots, v_{i_r}\}$ is a maximal independent set of $G$ then
\[
S' = \{v_{11}, \cdots , v_{1t}, v_{i_2},  \cdots , v_{i_r}\}
\]
is a maximal independent set of $H$. Moreover, it is easy to see that every maximal independent set of $H$ must be of one of these two types.

Let $M$ and $M(t)$ be the matrices associated to the systems of equations arising from looking for well-covered weightings of $G$ and $H$ respectively. We notice that $M(t)$ has $t-1$ more columns than $M$ but that it has exactly the same number of rows, and in fact the same rank as $M$, which is $n-m$. The result follows.
\end{proof}

By using this lemma repeatedly in a graph that is constructed from $G$ by a sequence of blowups of vertices of $G$ we get the following theorem.

\begin{theorem}
Let $G$ be a graph with $V(G) = \{v_1, \cdots , v_n\}$ and $m = wcdim(G, \textbf F)$. Let $H= G(t_1v_1, t_2v_2, \cdots , t_nv_n)$, where $t_i \in \mathbb{N}$ for all $i=1,2,\cdots ,n$. Then, 
\[
wcdim(H, \textbf F) = (m-n) + \sum_{i=1}^n t_i        
\]
\end{theorem}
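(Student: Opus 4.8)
The plan is to prove the multiple-blowup formula by induction on the number of vertices whose blowup factor differs from $1$, using Lemma \ref{G(tv)} as the base case and the inductive engine. The key observation is that a $t$-blowup is a purely local operation on a single vertex, so a sequence of blowups $G(t_1v_1, \ldots, t_nv_n)$ can be realized as an iterated composition. First I would set $H_0 = G$ and define $H_j = H_{j-1}(t_jv_j)$ for $j = 1, \ldots, n$, so that $H_n = H$. The crucial point requiring care is that Lemma \ref{G(tv)} is stated for blowing up a single vertex of a fixed graph, and I must verify that blowing up $v_j$ in the already-modified graph $H_{j-1}$ behaves exactly as blowing up $v_j$ in $G$; this holds because the blowups are performed on distinct vertices $v_1, \ldots, v_n$ and the blowup of $v_j$ only depends on the adjacency structure at $v_j$, which is preserved by earlier blowups of other vertices.

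With this setup, the inductive step is a direct application of the lemma. Assuming $wcdim(H_{j-1}, \textbf F) = m + \sum_{i=1}^{j-1}(t_i - 1)$, I would apply Lemma \ref{G(tv)} to the graph $H_{j-1}$ at vertex $v_j$ with blowup factor $t_j$, which contributes an additional $t_j - 1$ to the well-covered dimension, yielding
\[
wcdim(H_j, \textbf F) = wcdim(H_{j-1}, \textbf F) + (t_j - 1) = m + \sum_{i=1}^{j}(t_i - 1).
\]
Iterating to $j = n$ gives $wcdim(H, \textbf F) = m + \sum_{i=1}^n (t_i - 1) = m - n + \sum_{i=1}^n t_i$, which is precisely the claimed formula after distributing the summation.

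The step I expect to be the main obstacle is the verification that the lemma applies cleanly at each stage, i.e. that the rank and column-count bookkeeping from the single-blowup lemma composes correctly across iterations. Specifically, Lemma \ref{G(tv)} guarantees that blowing up one vertex leaves the rank of the associated system unchanged while adding $t-1$ columns; I must confirm that when $v_j$ is blown up in $H_{j-1}$, the maximal independent sets of $H_{j-1}$ (which now involve the blown-up clusters from earlier steps) still split into the two types described in the lemma's proof relative to $v_j$. This follows because a vertex of $H_{j-1}$ that was already blown up behaves, as a block, like a single vertex for the purposes of the local analysis at $v_j$, so the two-type classification of maximal independent sets and the resulting column/rank count go through verbatim. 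Once this compatibility is established, the induction is routine and the formula follows immediately.
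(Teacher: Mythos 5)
Your proposal is correct and takes essentially the same route as the paper: the paper's entire proof is the remark that the theorem follows "by using this lemma repeatedly," i.e.\ exactly your iterated application of Lemma \ref{G(tv)}, which is legitimate since the multiple blowup $G(t_1v_1,\cdots,t_nv_n)$ is by definition the composition of single blowups and the lemma applies to any graph at any vertex. Your extra verification of the two-type classification at each stage is harmless but unnecessary, as the lemma can be invoked as a black box on $H_{j-1}$ directly.
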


Now we will look at the lexicographic product of graphs. We start with a definition.

\begin{definition}
The lexicographic product of $G$ and $H$, denoted $G \bullet H$, is the graph with vertex set $V(G)\times V(H)$ and edges joining $(g,h)$ and $(g',h')$ if and only if $gg' \in E(G)$ or $g=g'$ and $hh'\in E(H)$.
\end{definition}

\begin{corollary}\label{blowlex}
Let $G$ be a graph in $n$ vertices with $wcdim(G, \textbf F) =m$. Then, 
\[
wcdim\left(G  \bullet  \overline{K_t}, \textbf F \right)=m +n (t-1)
\]
where $t\in \mathbb{N}$.
\end{corollary}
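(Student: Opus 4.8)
The plan is to recognize $G \bullet \overline{K_t}$ as a simultaneous blowup of every vertex of $G$ and then invoke the preceding multiple-blowup theorem. Since the formula $m + n(t-1)$ is exactly $(m-n) + nt = (m-n) + \sum_{i=1}^n t$, the corollary should fall out of that theorem once we identify the right graph.

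First I would unwind the definition of the lexicographic product in the special case $H = \overline{K_t}$. Because $\overline{K_t}$ has no edges, the clause ``$g = g'$ and $hh' \in E(H)$'' in the definition of $G \bullet H$ can never be satisfied, so two vertices $(g,h)$ and $(g',h')$ of $G \bullet \overline{K_t}$ are adjacent precisely when $gg' \in E(G)$. In particular, for each fixed $g \in V(G)$ the block $\{(g,h) : h \in V(\overline{K_t})\}$ is an independent set of size $t$, and whether two such blocks are joined is governed entirely by the adjacency of the corresponding vertices in $G$.

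Next I would observe that this is exactly the description of $G(tv_1, tv_2, \cdots, tv_n)$, the graph obtained by performing a $t$-blowup at each of the $n$ vertices of $G$: each block $\{(g,h)\}$ plays the role of the independent set $V_{v_i}$ that replaces $v_i$, and edges between blocks are inherited from $G$ in the same way blowups prescribe. This yields a graph isomorphism $G \bullet \overline{K_t} \cong G(tv_1, \cdots, tv_n)$. Finally, with this identification in hand, I would apply the preceding theorem with $t_1 = t_2 = \cdots = t_n = t$, giving
\[
wcdim\left(G \bullet \overline{K_t}, \textbf F\right) = (m - n) + \sum_{i=1}^n t = (m - n) + nt = m + n(t-1),
\]
as claimed.

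The only real content lies in the first identification; once the lexicographic product with $\overline{K_t}$ is seen to be the all-vertices blowup, the computation is immediate. I expect the main (and only mild) obstacle to be justifying the edge correspondence carefully — in particular confirming that no ``intra-block'' edges arise, which is precisely the point at which the emptiness of $\overline{K_t}$ is used, and checking that the inter-block edges match the blowup construction vertex by vertex.
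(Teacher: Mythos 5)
Your proposal is correct and follows exactly the paper's own argument: the paper likewise proves this corollary by noting the isomorphism $G(tv_1, \cdots, tv_n) \cong G \bullet \overline{K_t}$ and then applying the preceding multiple-blowup theorem with all $t_i = t$. Your additional verification that $\overline{K_t}$ being edgeless rules out intra-block edges is a useful elaboration of the isomorphism the paper simply asserts.
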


\begin{proof}
Assume that $V(G)= \{v_1, \cdots , v_n\}$. The result follows from the previous theorem and the fact that  $G(tv_1, tv_2, \cdots , tv_n) \cong G  \bullet  \overline{K_t}$.
\end{proof}

The previous corollary is also a corollary of Theorem \ref{lex}. In order to prove this theorem we need a couple of linear algebra results that we will not prove, but will mention in full detail.

\begin{lemma}\label{linalg1}
Let $M$ be an $n\times m$ matrix and let $N$ be the $(n-1) \times m$ matrix obtained by subtracting the first row $R_1$ of $M$ from all the other rows of $M$, and then deleting $R_1$. Assume $rank(N)=k$, then, 
\[
rank(M) = \left\{
\begin{array}{ll}
k & $if $R_1$ is dependent of other rows of $M$$ \\
k+1 & $if $R_1$ is independent from other rows of $M$$ 
\end{array}
\right.
\]
\end{lemma}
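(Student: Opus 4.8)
The plan is to reduce everything to the observation that subtracting one fixed row from all the others is a sequence of elementary row operations, and hence does not change the rank. Let $R_1, R_2, \ldots, R_n$ denote the rows of $M$, and let $M'$ be the matrix whose rows are $R_1, R_2 - R_1, \ldots, R_n - R_1$. Since $M'$ is obtained from $M$ by adding $(-1)\cdot R_1$ to each of the rows $R_2, \ldots, R_n$, we have $rank(M') = rank(M)$. But the last $n-1$ rows of $M'$ are exactly the rows of $N$, so $M'$ is simply $N$ with the single extra row $R_1$ placed on top. First I would record this and thereby identify $rank(M)$ with the dimension of the subspace spanned by $R_1$ together with the rows of $N$.

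Next I would run the dimension count. Write $W$ for the row space of $N$, so $\dim W = k$ by hypothesis, and let $\langle R_1\rangle$ be the line spanned by $R_1$; then $rank(M) = \dim(W + \langle R_1\rangle)$. If $R_1 \in W$, then $W + \langle R_1\rangle = W$ and the rank is $k$ (this case also covers $R_1 = 0$). If $R_1 \notin W$, then $\langle R_1\rangle \cap W = \{0\}$ and adding $R_1$ raises the dimension by exactly one, giving rank $k+1$. This is precisely the two-case dichotomy in the statement, so the proof is complete once the two cases are matched with the conditions on $R_1$.

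The one genuinely delicate point --- and the step I would be most careful about --- is the precise meaning of ``$R_1$ is dependent on the other rows.'' What the argument actually distinguishes is whether or not $R_1$ lies in $W$, i.e.\ whether $R_1$ is a linear combination of the \emph{transformed} rows $R_2 - R_1, \ldots, R_n - R_1$ (equivalently, whether $M$ admits a dependence $\sum_i a_i R_i = 0$ among its rows with $\sum_i a_i \neq 0$). This is the condition that must be used, and it is not in general the same as $R_1$ being a combination of the untouched rows $R_2, \ldots, R_n$: for instance, if $R_1 = R_2$ while $R_1, R_3, \ldots, R_n$ are linearly independent, then $R_1$ depends on the other rows (namely $R_1 = R_2$) yet $R_1 \notin W$, and the rank is $k+1$ rather than $k$. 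Accordingly, in writing the proof I would phrase the dichotomy in terms of membership of $R_1$ in the row space of $N$, which is the form in which the lemma is actually applied.
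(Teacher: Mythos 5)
Your proof is correct, and there is nothing in the paper to compare it against: the authors explicitly state that this lemma is one of the ``linear algebra results that we will not prove,'' so yours is the only proof of record. Your route is the natural one --- subtracting $R_1$ from the remaining rows is a sequence of elementary row operations, so $rank(M)=\dim\left(W+\langle R_1\rangle\right)$ where $W$ is the row space of $N$, and the two-case dimension count finishes it. Every step is sound.

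More importantly, the ``delicate point'' you flag is not a quibble but a genuine correction to the statement. Read literally, with ``$R_1$ is dependent of other rows of $M$'' meaning $R_1\in \mathrm{span}\{R_2,\dots ,R_n\}$, the lemma is false, and your counterexample scheme works: take $R_1=R_2=(1,0)$ and $R_3=(0,1)$. Then $R_1$ is certainly dependent on the other rows, $N$ has rows $(0,0)$ and $(-1,1)$ so $k=1$, yet $rank(M)=2=k+1$. The correct dichotomy is the one your proof actually uses: whether $R_1$ lies in the row space of $N$, equivalently whether the rows of $M$ admit a dependence $\sum_i a_iR_i=0$ with $\sum_i a_i\neq 0$. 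Note that this corrected condition is invariant under permuting the rows (the row space of $N$ equals the span of all pairwise differences $R_i-R_j$, and either every row lies in it or none does), which has a downstream consequence for the paper: Remark \ref{linalg2} proposes to re-arrange rows ``when possible'' so that the first row depends on the others, and your example shows that no re-arrangement rescues the literal reading --- the remark and Theorem \ref{lex} must be interpreted with your membership-in-the-row-space condition (for the matrices arising there, rows are indicator vectors of maximal independent sets, and the relevant question is whether such a dependence with nonzero coefficient sum exists). So your proof not only fills the gap the paper left open, it repairs the statement so that it is actually true.
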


For the next couple of results, we denote the Kronecker (or tensor) product of two matrices, $M$ and $A$, by $M\otimes A$.

\begin{remark}\label{linalg2}
Let $N, B$ and $C$ be matrices obtained by using the construction described in Lemma \ref{linalg1} from matrices $M, A$, and $M\otimes A$ respectively, we will re-arrange rows in the matrices if necessary to get, when possible, the first row to be dependent of the others. Then, $rank(C)=rank(M\otimes A)$ whenever there is a row that is dependent of others in $A$ or $M$, as in these cases we can always choose a row of $M\otimes A$ that depends on the other rows of this matrix. On the other hand, if both $M$ and $A$ have linearly independent rows, then $M\otimes A$ also has this property (using $rank(A\otimes M) = rank(A)rank(M)$), and thus $rank(C)=rank(M\otimes A)-1$.

If we now use Lemma  \ref{linalg1}, and assume $rank(N)=k$ and $rank(B)=q$, then
\[
rank(C) = \left\{
\begin{array}{ll}
kq & $if both $M$ and $A$ have linearly dependent rows$ \\
k(q+1) & $if $M$ has linearly dependent rows and $A$ does not$ \\
(k+1)q & $if $A$ has linearly dependent rows and $M$ does not$ \\
(k+1)(q+1)-1 & $if both $M$ and $A$ have linearly independent rows$
\end{array}
\right.
\]
\end{remark}

Now we have all the tools needed to prove.

\begin{theorem}\label{lex}
Let $G$ and $H$ be graphs with $|V(G)|= a$, $|V(H)|= b$, $wcdim(G, \textbf F)=n$, and $wcdim(H, \textbf F)=m$. Then, 
\[
wcdim(G\bullet H, \textbf F) = nb+am - nm +\delta_{m-b+1, i}(n-a) +\delta_{n-a+1, j}(m- b)
\]
where $\delta_{xy}$ represents the Kronecker delta, and $i$, $j$ are the number of maximal independent sets of $H$ and $G$ respectively.
\end{theorem}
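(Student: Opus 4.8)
The plan is to realize the linear system defining the well-covered space of $G\bullet H$ as a Kronecker product of the systems attached to $G$ and to $H$, and then to read off its rank from Remark \ref{linalg2}. The engine of the whole argument will be the identity $wcdim(G\bullet H,\textbf F)=ab-rank(C)$, where $C$ is the difference matrix of $M\otimes A$ produced by the construction of Lemma \ref{linalg1}.

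First I would describe the maximal independent sets of $G\bullet H$. Since $(g,h)$ and $(g',h')$ are adjacent whenever $gg'\in E(G)$, the projection onto $V(G)$ of any independent set is independent in $G$, while inside a fixed fiber $\{g\}\times V(H)$ the adjacency is exactly that of $H$. From this I would argue that a maximal independent set arises from a maximal independent set $S$ of $G$ together with a choice of maximal independent set of $H$ in each fiber over $S$; in particular every product $S\times T$, with $S$ maximal in $G$ and $T$ maximal in $H$, is maximal independent in $G\bullet H$. Ordering the vertices lexicographically, the indicator row vector of $S\times T$ is precisely $\chi_S\otimes\chi_T$. Letting $M$ and $A$ be the incidence matrices whose rows are the indicator vectors of the maximal independent sets of $G$ (so $M$ has $j$ rows) and of $H$ (so $A$ has $i$ rows), this identifies the incidence matrix of $G\bullet H$ with $M\otimes A$.

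Second, I would feed this into the rank computation. By the discussion preceding Lemma \ref{linalg1} we have $wcdim(G,\textbf F)=a-rank(N)$ and $wcdim(H,\textbf F)=b-rank(B)$, so writing $k=rank(N)=a-n$ and $q=rank(B)=b-m$ puts us exactly in the hypotheses of Remark \ref{linalg2}. Applying that remark gives $rank(C)$ in the four cases governed by whether the rows of $M$ and of $A$ are linearly dependent or independent, namely $kq$, $k(q+1)$, $(k+1)q$, and $(k+1)(q+1)-1$. Expanding each of these in terms of $a,b,n,m$ collapses the common part to $nb+am-nm$, with a correction $+(n-a)$ appearing exactly when $A$ has independent rows and a correction $+(m-b)$ appearing exactly when $M$ has independent rows. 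Finally I would translate ``independent rows'' into the stated deltas: by Lemma \ref{linalg1} the rows of $A$ are independent precisely when $rank(A)=rank(B)+1$, that is, when the number $i$ of maximal independent sets of $H$ equals $b-m+1$, and symmetrically $M$ has independent rows precisely when $j=a-n+1$; these are the conditions recorded by $\delta_{\cdot,i}$ and $\delta_{\cdot,j}$.

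The hard part will be the first step: justifying that $M\otimes A$ really is row-equivalent to the full incidence matrix of $G\bullet H$. The enumeration of maximal independent sets of a lexicographic product is more delicate than the product description suggests, because one may a priori choose a \emph{different} maximal independent set of $H$ in each fiber over $S$, and such sets are not of the form $S\times T$. One must therefore check that, for the purpose of computing the rank of the well-covered system, these non-product sets contribute no independent constraints beyond those already encoded in $M\otimes A$ — equivalently, that the antichain property of maximal independent sets forces the fiberwise behaviour to be governed by the single matrix $A$. Establishing this reduction is the crux of the argument, and it is also where any dependence on $char(\textbf F)$ — hidden in the ranks $k$ and $q$ — must be tracked; once it is in place, the remaining work is exactly the case bookkeeping of Remark \ref{linalg2} described above.
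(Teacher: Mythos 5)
Your proposal follows the paper's own route step for step: form the incidence matrices $M$ and $A$ of the maximal independent sets of $G$ and $H$, identify the system for $G\bullet H$ with a Kronecker product, and read off the rank via Lemma \ref{linalg1} and the four-case bookkeeping of Remark \ref{linalg2}. Your algebra at the end is exactly the paper's case analysis, including the conditions $i=b-m+1$ and $j=a-n+1$ (the subscripts $\delta_{m-b+1,i}$ and $\delta_{n-a+1,j}$ in the theorem statement are typos for these). The one place you diverge is that you decline to claim the crucial identification and instead flag it as ``the hard part'': showing that the maximal independent sets of $G\bullet H$ which choose \emph{different} maximal independent sets of $H$ over different vertices of $S$ contribute no constraints beyond the row space of $M\otimes A$. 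You are right that this is the crux, and your proposal is incomplete without it. The paper does not prove it either: it simply asserts that $A\otimes M$ ``is the matrix associated to the homogeneous system'' of $G\bullet H$, which in fact accounts only for the product sets $S\times T$.

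The deeper problem is that this deferred step cannot be carried out, because the claim is false --- and with it the formula of Theorem \ref{lex}. Take $G=\overline{K_2}$ and $H=K_2$, so $a=b=2$, $n=2$, $m=1$, $j=1$, $i=2$, and $G\bullet H=K_2\cup K_2$. The disjoint-union lemma of Brown and Nowakowski (the paper's first lemma) gives $wcdim(G\bullet H,\textbf{F})=1+1=2$. On the other hand, ordering the vertices $(g_1,h_1),(g_1,h_2),(g_2,h_1),(g_2,h_2)$, the rows of $M\otimes A$ are $(1,0,1,0)$ and $(0,1,0,1)$, whereas the mixed maximal independent set $\{(g_1,h_1),(g_2,h_2)\}$ has indicator vector $(1,0,0,1)$, which is not in their span; including the mixed sets raises the rank of the reduced difference system from $1$ to $2$. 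The Kronecker computation, and the stated formula under either reading of the deltas, therefore outputs $3$ where the true value is $2$. The identification you need does hold in the degenerate case where $H$ has a unique maximal independent set ($H=\overline{K_t}$, i.e. Corollary \ref{blowlex}), since then no mixed sets exist; but as soon as $H$ has two maximal independent sets and $G$ has a maximal independent set of size at least two, mixed sets appear and generically increase the rank. So your plan cannot be completed as written: any correct computation of $wcdim(G\bullet H,\textbf{F})$ must build the system out of all fiberwise choices, and it does not reduce to $M\otimes A$. That your instinct located precisely the step where the published argument breaks is to your credit, but the honest conclusion is that both your proposal and the paper's proof fail at that step.
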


\begin{proof}
We first notice that if $S = \{v_{1_1}, v_{i_2}, \cdots, v_{i_r}\}$ is a maximal independent set of $G$ then
\[
S' = \{w_{1i_1}, \cdots , w_{t_1i_1}, w_{1i_2}, \cdots , w_{t_2i_2},  \cdots , w_{1i_r}, \cdots , w_{t_ri_r}\}
\]
is a maximal independent set of $G\bullet H$, where $\{  w_{1i_j}, \cdots , w_{t_ji_j}  \}$ is a maximal independent set of $H$ for all $j=1,2, \cdots , r$. Moreover, it is easy to see that every maximal independent set of $G\bullet H$ must be obtained this way. 

Set the weight-sums of each of the independent sets of $G$ equal to zero. Let $M$ be the matrix representing that homogeneous system of equations. Note that the matrix $N$ needed to find $wcdim(G, \textbf F)$ is obtained from $M$ by using the construction described in Lemma \ref{linalg1}.  Similarly, by repeating this process with $H$ we obtain $B$ (needed for finding $wcdim(H, \textbf F)$) out of $A$ (found by setting the weight-sums of the maximal independent sets of $H$ equal to zero).

Now we notice that (because of the first paragraph in this proof) $A\otimes M$ is the matrix associated to the homogeneous system given by setting the weight-sums of all the independent sets of $G\bullet H$ equal to zero. It follows that we are interested in finding the rank of the matrix $C$ obtained from $A\otimes M$ by using the construction described in Lemma \ref{linalg1}.

Since $rank(N) = a-n$, $rank(B)=b-m$, and $|V(G\bullet H)|=ab$, then using that a matrix has linearly dependent rows if and only if its rank is not equal to its number of rows, and Remark \ref{linalg2}, we get
{\small
\[
wcdim(G\bullet H, \textbf F) = \left\{
\begin{array}{ll}
nb+am - nm & $if $i\neq b-m+1, \ j\neq a-n+1$$ \\
nb+am- nm+n-a & $if $i=b-m+1, \ j\neq  a-n+1$$ \\
nb+am - nm+m-b & $if $i\neq b-m+1, \ j=  a-n+1$$ \\
nb+am - nm +m-b +n- a  & $if $i=b-m+1, \ j=  a-n+1$$
\end{array}
\right.
\]
}
\hspace{-.09in} where $i$, $j$ represent the number of maximal independent sets of $H$ and $G$ respectively (which are the number of rows of $A$ and $M$ respectively).

The result follows from the definition of the Kronecker delta.
\end{proof}

\begin{corollary}
Let $G$ and $H$ be graphs with more maximal independent sets than vertices, and such that $|V(G)|= a$, $|V(H)|= b$, $wcdim(G, \textbf F)=n$ and $wcdim(H, \textbf F)=m$. Then, 
\[
wcdim(G\bullet H, \textbf F) = nb+am - nm
\]
\end{corollary}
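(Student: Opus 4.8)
The plan is to show that the stated hypothesis forces both Kronecker deltas in Theorem \ref{lex} to vanish, so that the formula there collapses to $nb+am-nm$. First I would recall from the proof of Theorem \ref{lex} the precise shape of the two matrices it manipulates: $M$ is the matrix obtained by setting the weight-sums of the maximal independent sets of $G$ equal to zero, so it has exactly $j$ rows and $a=|V(G)|$ columns, where $j$ is the number of maximal independent sets of $G$; likewise $A$ has $i$ rows and $b=|V(H)|$ columns, where $i$ is the number of maximal independent sets of $H$.

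Next I would translate the combinatorial hypothesis. Saying that $G$ and $H$ each have more maximal independent sets than vertices means precisely $j>a$ and $i>b$. Since a matrix with strictly more rows than columns cannot have linearly independent rows — one cannot fit $j>a$ linearly independent vectors in $\textbf F^{a}$, nor $i>b$ of them in $\textbf F^{b}$ — it follows immediately that both $M$ and $A$ have linearly dependent rows.

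Finally I would invoke Remark \ref{linalg2} (equivalently, the first line of the case analysis inside the proof of Theorem \ref{lex}): when both $M$ and $A$ have linearly dependent rows we are in the case $rank(C)=(a-n)(b-m)$, for which $\delta_{m-b+1,i}=\delta_{n-a+1,j}=0$. Substituting these vanishing deltas into the formula of Theorem \ref{lex} gives $wcdim(G\bullet H,\textbf F)=nb+am-nm$, since $ab-(a-n)(b-m)=nb+am-nm$, and the $ab$ here is just $|V(G\bullet H)|$.

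The only genuinely substantive step is the identification of the combinatorial hypothesis with a rank condition: one must observe that the number of maximal independent sets is exactly the number of rows of $M$ (respectively $A$), while the number of vertices is the number of columns, so that ``more maximal independent sets than vertices'' is literally the statement that these matrices have more rows than columns and therefore linearly dependent rows. Once this correspondence is made, the corollary is an immediate specialization of Theorem \ref{lex}, with no further computation required.
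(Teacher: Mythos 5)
Your proposal is correct and is essentially the paper's own (implicit) justification: the corollary is stated as an immediate specialization of Theorem \ref{lex}, and the intended reasoning is precisely your observation that having more maximal independent sets than vertices means the matrices $M$ and $A$ have more rows than columns, hence linearly dependent rows, so the dependent-rows case of Remark \ref{linalg2} applies and both correction terms vanish. The computation $ab-(a-n)(b-m)=nb+am-nm$ then yields the stated formula.
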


\begin{remark}
As mentioned above, Corollary \ref{blowlex} is a corollary of Theorem \ref{lex}. In order to see this we just need to notice that $\overline{K_t}$ has one maximal independent set and that $wcdim(\overline{K_t}) = V(\overline{K_t})=t$.
\end{remark}

\section{Cartesian products: Paths and Cycles.}

The Cartesian product of $G\times H$ is the graph with vertices $(u,v)$ where $u\in V(G)$ and $v\in V(H)$ and there exists an edge joining $(u_1,v_1)$ with $(u_2,v_2)$ iff there exists an edge in $G$ joining $u_1$ and $v_2$ and $v_1=v_2$ or there exists an edge in $H$ joining $v_1$ and $v_2$ and $u_1=u_2$. 

We start by exploring products of paths and/or cycles. Just like in Section \ref{sectionbasic} we will study these two classes of graphs almost simultaneously. We will also borrow from that section the idea of  comparing maximal independent sets that agree in all but a few of their vertices.  Also, given that we will use many pictures in this section we need the following definitions.

\begin{definition}
If $G = P_n \times P_m$ or $G=P_n\times C_m$, where $n,m>1$,  we will say that a vertex is on the interior of $G$ (or that it is an internal vertex) if its degree is equal to $4$. A vertex of degree $2$ or $3$ will be said to be on the boundary of $G$ (or that it is a boundary vertex). A vertex of degree $2$ will be also called a corner of $G$.
\end{definition}

\begin{lemma}\label{lembdryweit}
Let $w$ be any well-covered weighting of the graph $G$.
\begin{enumerate}
\item If $G = P_n \times P_m$, where $m\geq 2$ and $n\geq 4$, or
\item If $G=P_n\times C_3$, where $n\geq 2$, or
\item If $G=P_n\times C_m$, where $m\geq 5$ and $n\geq 2$,
\end{enumerate}
then any two adjacent boundary non-corner vertices, $a$ and $b$, of $G$ must satisfy $w(a)=w(b)$.
\end{lemma}

\begin{proof}
Let $a$ and $b$ be two adjacent boundary non-corner vertices of $G$. Consider the following two maximal independent sets of $P_4\times P_2$. 

\begin{figure}[h]
\centering
\includegraphics[height=.6in]{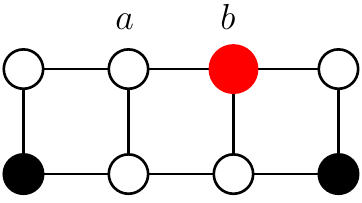}    \hspace{.7in} \includegraphics[height=.6in]{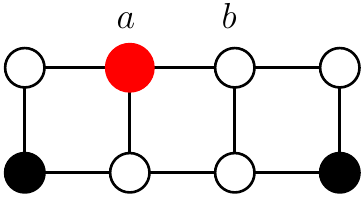}   
\end{figure}

Here we can see that $w(a)=w(b)$, for all well-covered weightings of $P_4\times P_2$. Using a greedy algorithm we can find maximal independent sets of $G$ that are the same except at these `local' pictures. This is immediate for cases (1) and (3). On the other hand, for case (2) we need (in each of these figures) to identify the vertices on their leftmost `side' to those on their rightmost `side' to create a local picture of $P_n\times C_3$. The result follows.
\end{proof}

Now that we know how non-corner boundary vertices behave we need to take a look at corner vertices.

\begin{lemma}\label{lemcrnrweit} 
Let $G = P_n \times P_m$, where $n,m\geq 3$, and $w$ be any well-covered weighting of $G$. Let $b$ and $c$ be two (boundary) vertices adjacent to a corner vertex $a$. Then, $w(a)=w(b)=w(c)$.
\end{lemma}

\begin{proof}
Let $a$ be a corner vertex and $b$ one of its neighbors. We consider the two local pictures of this corner below

\begin{figure}[h]
\centering
\includegraphics[height=.7in]{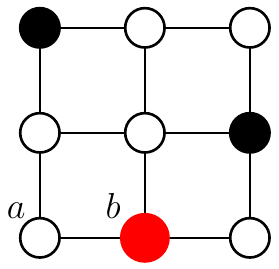}    \hspace{.5in} \includegraphics[height=.7in]{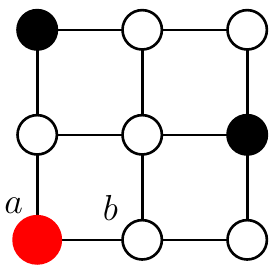}   
\end{figure}

Using the same ideas used in the proof of Lemma \ref{lembdryweit}, we get that these pictures imply that $w(a)=w(b)$, for all well-covered weighting of $G$. Finally, by simply reflecting the picture we obtain $w(a)=w(c)$.
\end{proof}

We summarize the results in Lemmas \ref{lembdryweit} and \ref{lemcrnrweit} in the following corollary. 

\begin{corollary}\label{corPnPmPnCm}
Let $w$ be any well-covered weighting of the graph $G$.
\begin{enumerate}
\item If $G = P_n \times P_m$, where $n,m\geq 4$, then $w(a)=w(b)$, for any two boundary vertices, $a$ and $b$, of $G$.
\item If $G= P_n\times C_m$, where $n\geq 2$, and $m= 3$ or $m\geq 5$, then $w(a)=w(b)$, for any two boundary vertices, $a$ and $b$, that are on the same `boundary cycle' of $G$.
\end{enumerate}
\end{corollary}

Now that we understand boundary vertices we move on to study interior vertices.

\begin{lemma}\label{lemintweit}
Let $w$ be any well-covered weighting of the graph $G$. 
\begin{enumerate}
\item If $G = P_n \times P_m$, where $m, n\geq 5$, or
\item If $G=P_n\times C_m$, where $m\geq 6$ and $n\geq 5$, or
\item If $G=C_n\times C_m$, where $m, n\geq 6$,
\end{enumerate}
then $w(a)=0$, for all $a\in E(G)$.
\end{lemma}

\begin{proof}
In order to prove this we will embed a $P_5\times P_5$ and a $P_4\times P_4$ as `local pictures', hence the bounds for $m$ and $n$ in the (three different) hypothesis. 

We consider the following figures, which show maximal independent sets of $P_5\times P_5$ and $P_4\times P_4$ that share all but the grey vertices (red if you read this in color)

\begin{figure}[h]
\centering
\includegraphics[height=1in]{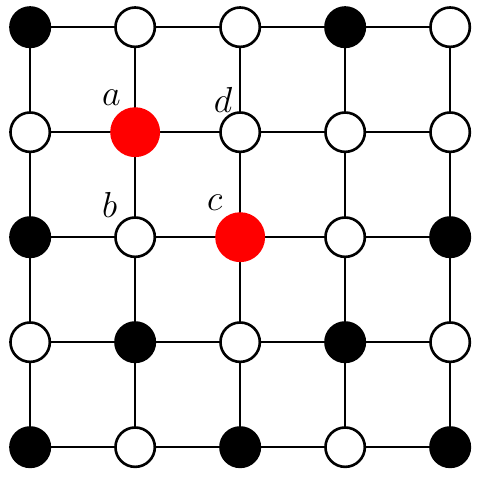}    \hspace{.2in} \includegraphics[height=1in]{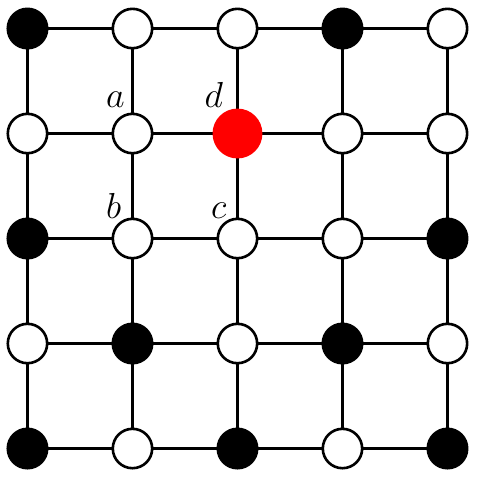}  \hspace{.3in}  \includegraphics[height=1in]{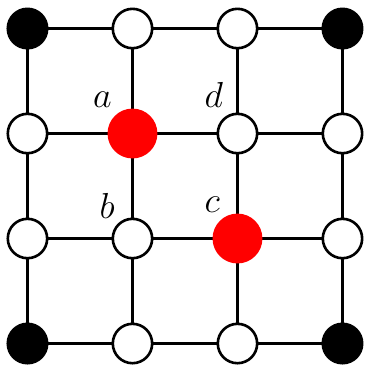}    \hspace{.2in} \includegraphics[height=1in]{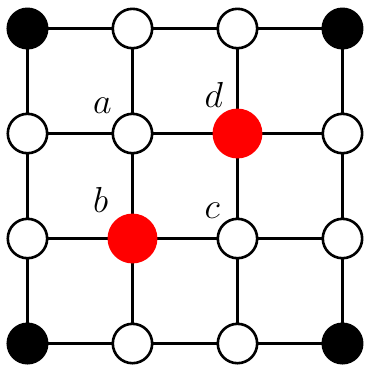} 
\end{figure}

The picture on the left (where we consider $P_5\times P_5$ as a subgraph of $G$) implies that $w(a)+w(c) =w(d)$, for all well-covered weightings of $G$. However, by considering the picture on the right (where we consider $P_4\times P_4$ as a subgraph of $G$) we obtain $w(a)+w(c) =w(b)+w(d)$, for all well-covered weightings of $G$. It follows that $w(b)=0$, for all well-covered weightings of $G$. 

By embedding reflected and/or rotated versions of these figures into $G$ we obtain the desired result.
\end{proof}

Since all vertices in $C_n\times C_m$ are interior, the following result holds trivially.

\begin{lemma}\label{thmCnCm}
Let $m,n\in \mathbb{N}$ such that $m,n\geq 6$, then $wcdim(C_n\times C_m, \textbf{F})=0$, for all fields $\textbf{F}$.
\end{lemma}

We will now get results similar to Lemma \ref{thmCnCm}, but for $P_n\times P_m$ and $P_n\times C_m$.

\begin{lemma}\label{thmPnCm&PnPm}
Let $m,n\in \mathbb{N}$.
\begin{enumerate}
\item  If $m, n\geq 5$, then $wcdim(P_n \times P_m, \textbf{F})=0$, for all fields $\textbf{F}$.
\item If $m\geq 6$ and $n\geq 5$, then $wcdim(P_n\times C_m, \textbf{F})=0$, for all fields $\textbf{F}$.
\end{enumerate}
\end{lemma}

\begin{proof}
We consider the following figures, where $b$ is a boundary vertex.

\begin{figure}[h]
\centering
\includegraphics[height=1in]{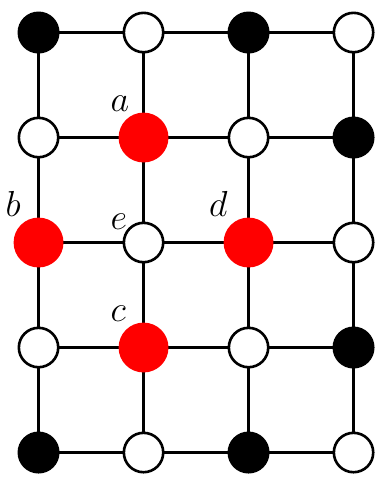}    \hspace{.7in} \includegraphics[height=1in]{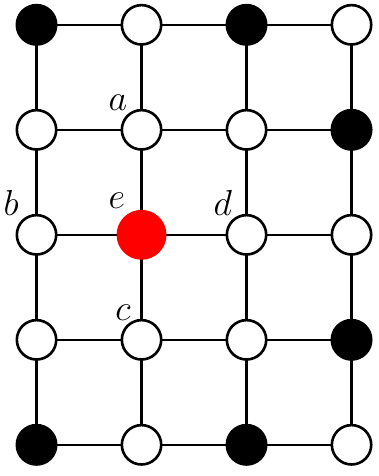}    
\end{figure}

Given the bounds for $m$ and $n$, we may embed these figures into $G$ for either case. It follows that $w(e) = w(a)+w(b)+w(c)+w(d)$, for all well-covered weightings of $G$. However, Lemma \ref{lemintweit} says that $w(a)=w(c)=w(d)=w(e) = 0$. Since $b$ can be chosen to be on any `boundary cycle' of $G$ the result follows from Lemma \ref{corPnPmPnCm}.
\end{proof}

Before phrasing our main theorem for this section (Theorem \ref{thmmainP_nxC_m})  we add one more result, which shows that the lower bound for the size of a cycle in Lemma \ref{thmPnCm&PnPm} is sharp.

\begin{lemma}
$wcdim(P_n\times C_5, \textbf{F})=2$, for all fields $\textbf{F}$, and all $n\in \mathbb{N}$ such that $n\geq 6$.
\end{lemma}

\begin{proof}
Let $G= P_n\times C_5$, and $w$ be a well-covered weighting of $G$. 

As we have done before, we will use pictures to find relations between the weights of vertices of $G$. Hence, we consider

\begin{figure}[h]
\centering
\includegraphics[height=1in]{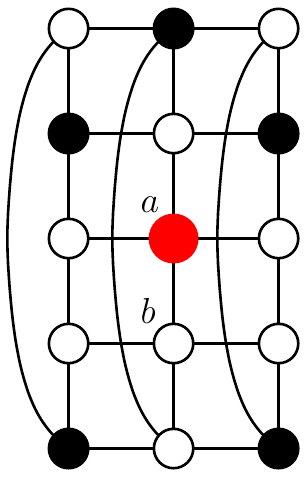}    \hspace{.4in} \includegraphics[height=1in]{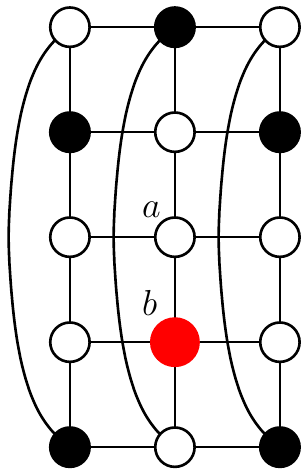}    \hspace{1.1in}       \includegraphics[height=1in]{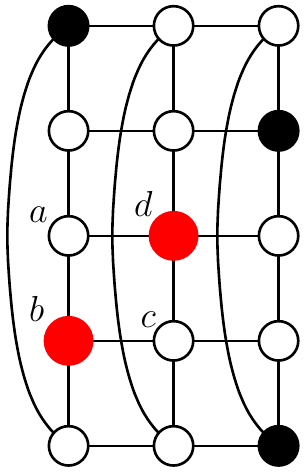}    \hspace{.4in} \includegraphics[height=1in]{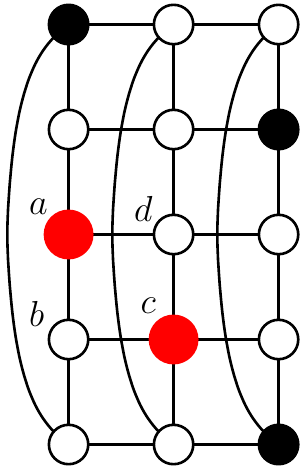}    
\end{figure}

Note that we are assuming the the left cycle of the figure on the right is a `boundary cycle' of $G$.

It is easy to see that the figure on the left implies $w(a)=w(b)$, for any two vertices, $a$ and $b$, of $G$ that are on any given non-boundary cycle. We use this to realize that in the figure on the right we get $w(c)=w(d)$. Moreover, since $w(a)+w(c)=w(b)+w(d)$ then $w(a)=w(b)$. Hence,  any two vertices on any given cycle have the same weight.

Next we consider yet two more pairs of figures.

\begin{figure}[h]
\centering
\includegraphics[height=.9in]{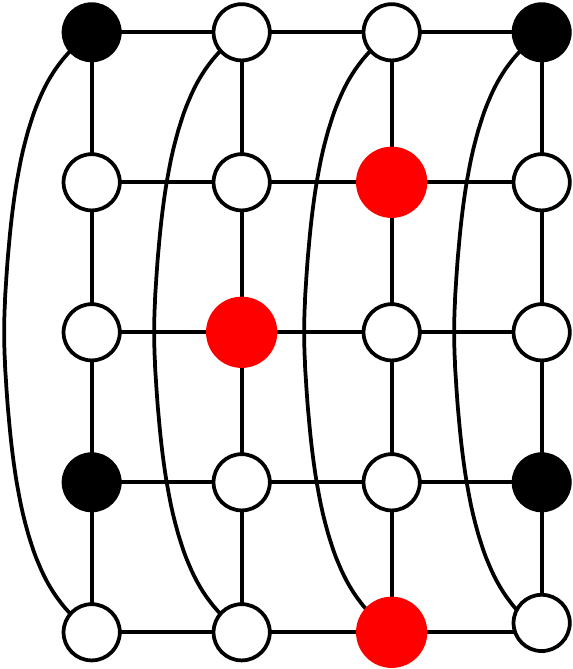}    \hspace{.05in} \includegraphics[height=.9in]{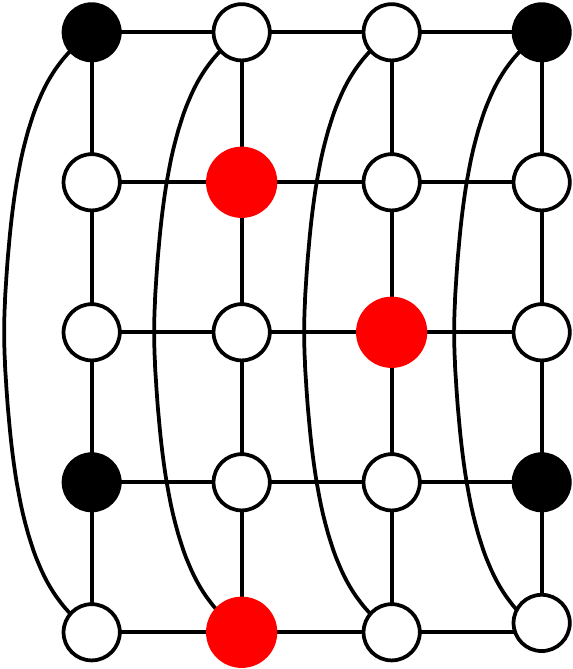}  \hspace{.7in}  \includegraphics[height=.9in]{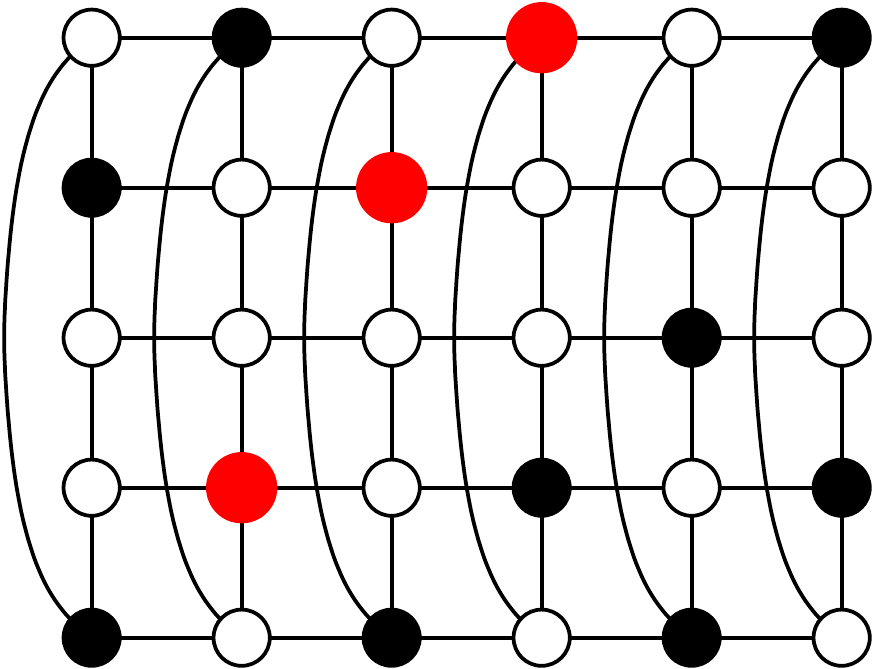}    \hspace{.05in} \includegraphics[height=.9in]{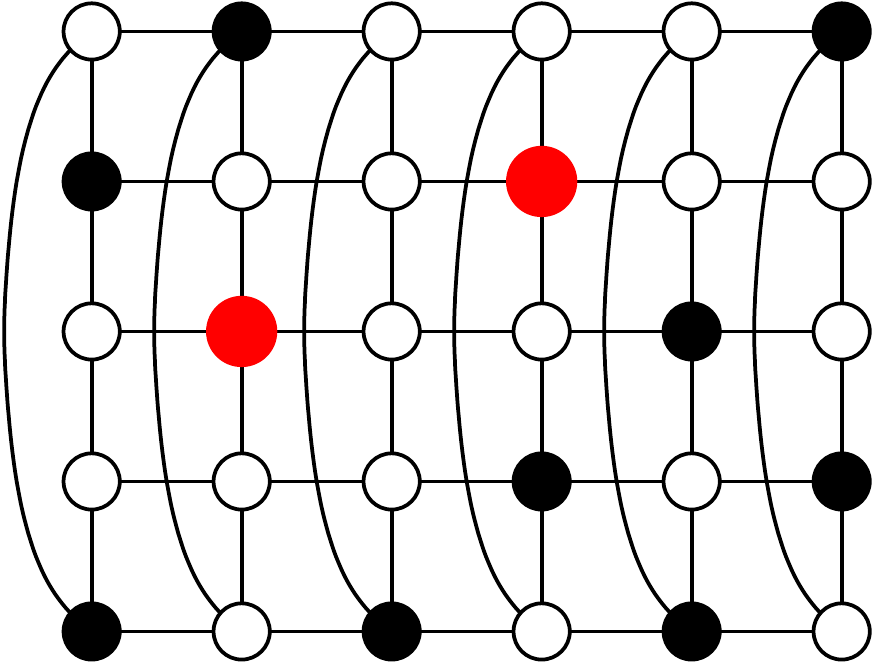} 
\end{figure}

The figure on the left assures that every non-boundary vertex has the same weight. Now we use this to obtain, from the figure on the right, that all the non-boundary vertices of $G$ have weight equal to zero.

So far, we know that $wcdim(P_n\times C_5, \textbf{F})\leq 2$. We notice that a boundary cycle of $G$ must contain exactly two vertices of every maximal independent set of $P_n\times C_5$. Hence, the functions defined by
\[
f_1(v) = \left\{ \begin{array}{ll}
1 \ \ \ & \text{if $v$ is a vertex of $\mathcal{C}_1$} \\ 
0 \ \ \ & \text{otherwise} \\ 
\end{array}
\right.
\]
\[
f_2(v) = \left\{ \begin{array}{ll}
1 \ \ \ & \text{if $v$ is a vertex of $\mathcal{C}_n$} \\ 
0 \ \ \ & \text{otherwise} \\ 
\end{array}
\right.
\]
where $\mathcal{C}_1$ and $\mathcal{C}_n$ are the two boundary cycles of $P_n\times C_5$, are linearly independent well-covered weightings of $P_n\times C_5$.
\end{proof}

We close this section with a theorem that collects all results in this section. 

\begin{theorem}\label{thmmainP_nxC_m} 
Let $m,n\in \mathbb{N}$
\begin{enumerate}
\item If $m,n\geq 6$, then $wcdim(C_n\times C_m, \textbf{F})=0$, for all fields $\textbf{F}$. 
\item  If $m, n\geq 5$, then $wcdim(P_n \times P_m, \textbf{F})=0$, for all fields $\textbf{F}$. 
\item If $m\geq 6$ and $n\geq 5$, then $wcdim(P_n\times C_m, \textbf{F})=0$, for all fields $\textbf{F}$. 
\item  If $n\geq 6$, then $wcdim(P_n\times C_5, \textbf{F})=2$, for all fields $\textbf{F}$.
\end{enumerate} 
\end{theorem}

\begin{remark}\label{remaddendum}
Theorem \ref{thmmainP_nxC_m} may be improved for `small' $m$ and $n$. In these cases, the values of $wcdim(G)$ are found using similar techniques and the proofs are heavy in pictures and/or matrix computations. For this reason we will not include these results here, they may be found on \verb+arxiv.org+,  or the fifth author's web site. These results will not be published otherwise.
\end{remark}

\section{Cartesian Products with Complete Graphs}

The greedy independent decomposition of a graph provides a means to construct maximal independent sets of $G\times H$. The following definition and lemma may be found in  \cite{O}. 

\begin{definition}
 Let $I_1$ be a maximal independent set of G. Choose $I_2,I_3\cdots$ such that $I_k$ is a maximal independent set in $G\backslash(I_1\cup I_2\cup\cdots\cup I_{k-1})$. Then, $\{I_j\}_{i=1}^n$ is a greedy independent decomposition of $G$. 
 \end{definition}
 
 The following result will be used later in this article. 

\begin{lemma}[Ovetsky, \cite{O}]
Let $\{I_j\}_{i=1}^n$ and $\{J_j\}_{j=1}^n$ be greedy independent decompositions of graphs G and H respectively, and without loss of generality suppose $n\leq m$. Then $(I_1\times J_1)\cup (I_2\times J_2)\cup\cdots\cup(I_n\times J_n)$ is a maximal independent set in $G\times H$. 
\end{lemma}


\begin{theorem}\label{thm5.1}
Let $H$ be a graph such that there exists a greedy independent decomposition of cardinality c. Then, $wcdim(K_n\times H, \textbf{F})\leq |V(H)|$ where $c<n$. 
\end{theorem}

\begin{proof}
Let $w_j$ denote the vertices of $H$, where $j=1,2,\ldots, m$, and $v_i$ denote the vertices of $K_n$, where $i=1,2,\ldots,n$. Let $\{S_i\}_{i=1}^{p}$ be a greedy independent decomposition of $H$ such that $|S|=p<n$. Let $\{J_i\}_{i=1}^{n}$ be a greedy independent decomposition of $K_n$ such that $J_i=\{v_i\}$. By the greedy independent theorem, 
\[
A= \bigcup_{i=1}^{p} (J_i\times S_i)
\]
is a maximal independent set of $K_n\times H$. Let $w_j\in S_1$  for some $j=1,2,\ldots,m$. Then, $(v_1,w_j)\in A$. Switch the vertex at $(v_1,w_j)$ to $(v_n,w_j)$ to create a new maximal independent set. Hence, $w(v_1,w_j)=w(v_n,w_j)$ for some well-covered weighting $w:V(K_n\times H)\rightarrow \textbf{F}$. Then, switch each vertex of the form $(v_1,w_j)$ to $(v_n,w_j)$ for every $w_j\in S_1$. Hence,  $w(v_1,w_j)=w(v_n.w_j)$ for all $w_j$ such that $w_j\in S_1$. 

Set $a=1$. Let $w_j\in S_{a+1}$  for some $j=1,2,\ldots,m$. Then, $(v_{a+1},w_j)\in A$. Switch the vertex at $(v_{a+1},w_j)$ to $(v_a,w_j)$ to create a new maximal independent set. Hence, $w(v_{a+1},w_j)=w(v_a,w_j)$. Repeat for every $w_j\in S_{a+1}$. Hence, $w(v_{a+1},w_j)=w(v_a,w_j)$ for every $w_j\in S_{a+1}$.
Set $a=a+1$ and repeat for every $a\leq p-1$. 

After the above process has been completed, $A$ has been shifted vertically by exactly one vertex to create a new maximal independent set $A^{'}$. Continue to shift the maximal independent set vertically in the same manner until $A^{'} = A$. Then, you will have shown that $w(v_i,w_j)=w(v_k,w_j)$ for every $i,k=1,2,\ldots,n$. Hence, $wcdim(K_n\times H,\textbf{F})\leq |V(H)|$. 
\end{proof}

\begin{lemma}\label{lemma5.1}
For every $P_n$, there is a greedy independent decomposition of cardinality exactly $2$.
\end{lemma}

\begin{proof}
Let $J=\{J_i\}_{i=1}^p$ be a greedy independent decomposition for $P_n$. Set $J_1=\{v_i: i=2n-1 \hspace{.05in} \mbox{for some} \hspace{.05in} n\in\mathbb{N} \}$. Then, $J_2=\{v_i: i=2n \hspace{.05in} \mbox{for some} \hspace{.05in} n\in\mathbb{N}\}$. Hence, $|J|=2$. 
\end{proof}

\begin{corollary}
$wcdim(K_n\times P_m, \textbf{F})$ for all $n,m\geq 3$. 
\end{corollary}

\begin{proof} By Lemma \ref{lemma5.1} and Theorem \ref{thm5.1}, $wcdim(K_n\times P_m, \textbf{F}) \leq |V(P_m)|=m$. Let $f_i:V(K_n\times P_m)\rightarrow \textbf{F}$ where
\[
f_i=
\left\{
\begin{array}{cl}
1 & \mbox{for all vertices of column $i$} \\
0 & \mbox{otherwise} \\
\end{array}
\right.
\]
Note that $\sum_{v\in A}f_i(v) = 1$ for every $i$ and every maximal independent set $A$. Hence, $F=\{f_i\}_{i=1}^m$ is a linearly independent set of well-covered weightings. Hence, $wcdim(K_n\times P_m,\textbf{F}) \geq m$ because $|F|=m$. Therefore, $wcdim(K_n\times P_m, \textbf{F})=m$.
\end{proof}

\begin{lemma}\label{lemma5.3}
If $m$ is even, there is a greedy independent decomposition of $C_m$ of cardinality exactly 2. If $m$ is odd, there is a greedy independent decomposition of $C_m$ of cardinality exactly 3. 
\end{lemma}

\begin{proof}
Let $J=\{J_i\}_{i=1}^{p}$ be a greedy independent decomposition of a cycle. 

\noindent  \textbf{Case 1: } Let $m$ be even. 

Set $J_1=\{v_i:i=2n-1 \hspace{.05in} \mbox{for some} \hspace{.05in} n\in\mathbb{N}\}$. Then, $J_2=\{v_i:i=2n \hspace{.05in} \mbox{for some} \hspace{.05in} n\in\mathbb{N}\}$. Hence, $|J|=2$. 

\noindent  \textbf{Case 2: } Let $m$ be odd.

Set $J_1=\{v_i:i=2n-1 \hspace{.05in} \mbox{for some} \hspace{.05in} n\in\mathbb{N}\}\backslash\{v_m\}$. Then, $J_2=\{v_i:i=2n \hspace{.05in} \mbox{for some} \hspace{.05in} n\in\mathbb{N}\}$ and $J_3=\{v_m\}$. Therefore, $|J|=3$. 
\end{proof}

\begin{corollary}
$wcdim(K_n\times C_m, \textbf{F})$ for all $n,m\geq 4$. 
\end{corollary}

\begin{proof} By Lemma \ref{lemma5.3} and Theorem \ref{thm5.1}, $wcdim(K_n\times P_m, \textbf{F}) \leq |V(C_m)|=m$. Let $f_i:V(K_n\times C_m)\rightarrow \textbf{F}$ where
\[
f_i=
\left\{
\begin{array}{cl}
1 & \mbox{for all vertices of column $i$} \\
0 & \mbox{otherwise} \\
\end{array}
\right.
\]
Then, $F=\{f_i\}_{i=1}^m$ is a set of  linearly independent well-covered weightings of $K_n\times C_m$. Hence, $wcdim(K_n\times C_m)\geq m$. Therefore, $wcdim(K_n\times C_m, \textbf{F})=m$. 
\end{proof}

\begin{corollary}
$wcdim(K_n\times K_m,\textbf{F})=m$ for all $n>m\geq 3$. 
\end{corollary}

\begin{proof} Let $J=\{J_i\}_{i=1}^m$ be a greedy independent decomposition of $K_m$. Hence, $|J|=m<n$. By Theorem \ref{thm5.1}, $wcdim(K_n\times K_m)\leq |V(K_m)|=m$. Let $f_i:V(K_n\times K_m)\rightarrow \textbf{F}$ where
\[
f_i=
\left\{
\begin{array}{cl}
1 & \mbox{for all vertices of column $i$} \\
0 & \mbox{otherwise} \\
\end{array}
\right.
\]
Then, $F=\{f_i\}_{i=1}^m$ is a set of linearly independent well-covered weightings of $K_n\times K_m$. Hence, $wcdim(K_n\times K_m) \geq m$. Therefore, $wcdim(K_n\times K_m,\textbf{F})=m$.  
\end{proof}

\begin{theorem}
$wcdim(K_n\times K_n,\textbf{F})=2n-1$ for all $n\geq 3$. 
\end{theorem}

\begin{proof} Let $A$ be a maximal independent set of $K_n\times K_n$ such that $(v_i,w_j),(v_k,w_l)\in A$ where $i\neq k$ and $j\neq l$. We can create a new maximal independent set $B$ such that $(v_i,w_l),(v_k,w_j)\in B$ and $A\backslash\{(v_i,w_j),(v_k,w_l)\}=B\backslash\{(v_i,w_l),(v_k,w_j)\}$. Hence, $w(v_i,w_j)+w(v_k,w_l)=w(v_i,w_l)+w(v_k,w_j)$ for any well-covered weighting $w$. Therefore, $w(v_i,w_j)=w(v_i,w_l)+w(v_k,w_j) - w(v_k,w_l)$ for any $i,j,k,l$ such that $i\neq k$ and $j\neq l$.

Choose row $k$ and column $l$ and assign arbitrary weights to the vertices of row $k$ and column $l$. Because each row and column have $n$ vertices and the row and column share the vertex $(v_k,w_l)$, there are $2n-1$ arbitrary weights assigned. Hence, for any vertex $(v_i,w_j)$ such that $i\neq k$ and $j\neq l$, $w(v_i,w_j)=w(v_i,w_l)+w(v_k,w_j)-w(v_k,w_l)$. Therefore, $wcdim(K_n\times K_n) \leq 2n-1$. Let $f_i,f_j:V(K_n\times K_n)\rightarrow \textbf{F}$ where
\[
f_i=
\left\{
\begin{array}{cl}
1 & \mbox{for all vertices of column $i$} \\
0 & \mbox{otherwise} \\
\end{array}
\right.
\]
and
\[
f_j=
\left\{
\begin{array}{cl}
1 & \mbox{for all vertices of row $j$} \\
0 & \mbox{otherwise} \\
\end{array}
\right.
\]
Hence, $f_i$ and $f_j$ are well-covered weightings of $K_n\times K_n$. Let $F=\{\{f_i\}_{i=1}^n,\{f_j\}_{j=1}^n\}$. Note that $F$ is not a linearly independent set. However, $F^*=F\backslash\{f_{j=n}\}$ is a linearly independent set and $|F^*|=2n-1$. Hence, we can conclude that $wcdim(K_n\times K_n,\textbf{F}) \geq 2n-1$. Therefore, $wcdim(K_n\times K_n,\textbf{F}) = 2n-1.$
\end{proof}



\end{document}